\documentclass{amsart}
\usepackage{amsmath, amssymb, amscd,amsthm,amsfonts}
\usepackage{orcidlink}
\usepackage{makecell} 
\usepackage{hyperref}
\hypersetup{
 colorlinks=true, 
 linkcolor=blue, 
 citecolor=green, 
 urlcolor=magenta 
}
\usepackage{tikz-cd}
\usepackage{tikz}
\usetikzlibrary{positioning}
\usepackage{microtype}
\theoremstyle{plain}
\newtheorem{theorem}{Theorem}[section]
\newtheorem{lemma}[theorem]{Lemma}
\newtheorem{proposition}[theorem]{Proposition}
\theoremstyle{definition}

\newtheorem{example}[theorem]{Example}
\newtheorem{remark}[theorem]{Remark}
\newtheorem{corollary}[theorem]{Corollary}
\title{The Euler Characteristic of Finite Subset Spaces }
\author{Walid Taamallah \orcidlink{0009-0006-8723-0264} }
\address{Institut Preparatoire aux Etudes d'Ingenieurs El Manar, Tunisie}
\email{walid.taamallah@ipeiem.utm.tn}
\date{\today}
\newcommand{\sub}[1]{\operatorname{Sub}_{#1}}
\newcommand{\symp}[1]{\operatorname{SP}^{#1}}
\begin{document}
\begin{abstract}
For a topological space $X$, the space of finite subsets $\sub{n}X$ consists of non-empty subsets of $X$ of cardinality at most $n$. We compute the Euler characteristic of $\sub{n}X$ for any space $X$ having the homotopy type of a finite CW complex. We obtain the explicit formula $\chi(\sub{n}X) = \sum_{i=1}^n \binom{\chi(X)}{i}$.
\end{abstract}
\maketitle
\section{Introduction}
For a topological space $X$, let $\sub{n}X$ denote the space of non-empty subsets of $X$ of cardinality at most $n$:
\[
\sub{n} X = \{A \subset X \mid 0 < |A| \le n\}. 
\] 
The topology on $\sub{n}X$ is inherited as a quotient space induced by the natural identification map:
\[
\pi : X^n \to \sub{n}X, \qquad (x_1,\ldots,x_n) \longmapsto \{x_1,\ldots, x_n\}. 
\]
The functors $\sub{n}(-)$ preserve homotopy type.

First introduced by Borsuk and Ulam~\cite{borsukulam}, these spaces have been widely studied in both algebraic topology and algebraic geometry, where they are often referred to as Ran spaces. The study of these spaces includes investigations into their connectivity properties~\cite{tuffley1,kallelsjerve, mostovoysadykov, taamallah}, their rational homotopy types~\cite{felixtanre, mostovoy}, and their homology for specific classes of spaces, such as graphs, circles, and surfaces, or for small values of $n$ in $\sub{n}X$~\cite{tuffley3, tuffley2, tuffley4, taamallah}.

More recently, various interesting papers have appeared on the subject~\cite{douteaulabeye, mostovoy, lazovskis}. While these developments have advanced our understanding of the topological and homological properties of finite subset spaces, an explicit formula for their most classical topological invariant (the Euler characteristic) has remained absent from the literature. We give the following short formula, valid for any space having the homotopy type of a finite CW complex.

\begin{theorem}
Let $X$ be any space of the homotopy type of a finite CW complex. The Euler characteristic of the $n$th finite subset space of $X$ is given by:
\begin{equation*}
\chi(\sub{n}X) = \sum_{i=1}^n \binom{\chi(X)}{i}.
\end{equation*}
\end{theorem}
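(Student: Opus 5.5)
Since $\sub{n}(-)$ preserves homotopy type and $\chi$ is a homotopy invariant, we may assume that $X$ is a finite CW complex. We may further arrange, after subdividing, that $X$ is a finite simplicial complex (or at least a regular CW complex). The plan is to stratify $\sub{n}X$ by cardinality and compute the Euler characteristic additively, using compactly supported Euler characteristics. For compact spaces these agree with ordinary ones.

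Write $\sub{n}X = \bigsqcup_{k=1}^n C_k(X)$, where $C_k(X)=\sub{k}X\setminus\sub{k-1}X$ is the unordered configuration space of $k$ distinct points. The first step is to show that $\sub{k-1}X\subset\sub{k}X$ is a closed subcomplex for a suitable CW structure. A natural choice is to take the product cell structure on $X^k$ (after triangulating to handle the quotient) and pass to $\sub{k}X$ via $\pi$, which identifies cells compatibly. Additivity of $\chi_c$ over a closed subspace and its open complement then gives
\[
\chi(\sub{n}X)=\sum_{k=1}^n \chi_c(C_k(X)).
\]
It therefore suffices to prove $\chi_c(C_k(X))=\binom{\chi(X)}{k}$.

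The second step is this configuration-space computation. The ordered configuration space $F_k(X)\subset X^k$ is the complement of the fat diagonal. By inclusion–exclusion over the strata of the diagonal, indexed by set partitions, or more simply by induction using the projection $F_k(X)\to F_{k-1}(X)$, whose fibre is $X$ minus $k-1$ points, one obtains
\[
\chi_c(F_k(X))=\chi(X)(\chi(X)-1)\cdots(\chi(X)-k+1).
\]
The fibre of that projection has $\chi_c=\chi(X)-(k-1)$, and multiplicativity of $\chi_c$ for this kind of stratified map can be checked cellwise. Alternatively, everything can be done with cell-counts in a simplicial structure: the open cells of $X^k$ lying in $F_k(X)$ are the products $\sigma_1\times\cdots\times\sigma_k$ of open simplices with pairwise distinct $\sigma_i$, once $X$ is subdivided finely enough. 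Note, however, that distinct simplices may still overlap in closure, so one must count cells of the open configuration space carefully.

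Finally, $\Sigma_k$ acts freely on $F_k(X)$, so $\chi_c(C_k(X))=\chi_c(F_k(X))/k!=\binom{\chi(X)}{k}$. Summing over $k=1,\dots,n$ gives the theorem.

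The main obstacle is making the additivity and multiplicativity rigorous for the non-compact strata. The cleanest route I would try first is purely combinatorial. Choose an ordering of the vertices of a triangulation of $X$ and give $\sub{n}X$ the cell structure whose open cells are the unordered sets $\{e_1,\dots,e_k\}$ of distinct open cells of $X$, with a product-type structure when several points lie in one cell. I would then count cells by dimension with signs. The generating function $\prod_{e}(1+t)^{\pm}$ leads to the coefficient extraction
\[
\sum_{k\le n}[t^k](1+t)^{\chi(X)}.
\]
If cells containing several points in the same open cell $e$ contribute $\binom{(-1)^{\dim e}}{m}$, which is the signed count, then the total is $\prod_e (1+t)^{(-1)^{\dim e}}=(1+t)^{\chi(X)}$ truncated at degree $n$. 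Verifying this local count, namely that the configurations of $m$ distinct points in an open $d$-cell have $\chi_c=\binom{(-1)^d}{m}$, is the key lemma, and it follows from the same fibration argument applied in $\mathbb R^d$.
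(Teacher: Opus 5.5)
Your proof is correct and follows the same skeleton as the paper: filter $\sub{n}X$ by cardinality and show that the $n$th layer contributes $\binom{\chi(X)}{n}$. The machinery, however, is different.

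The paper stays with ordinary Euler characteristics of compact spaces. It uses the pushout $\sub{n}X=\sub{n-1}X\cup_{B_2(X,n)}\symp{n}X$, inclusion--exclusion over the closed diagonals with the M\"obius function of $\Pi_n$, the orbit-counting formula for the non-free $\Sigma_n$-actions on $X^n$ and $F_2(X,n)$, and Macdonald's formula, which cancels at the end. In your language, the paper's increment $\chi(\symp{n}X)-\chi(B_2(X,n))$ is exactly $\chi_c(C_n(X))$. Its fat-diagonal proposition is equivalent to $\chi_c(F_n(X))=\chi(X)(\chi(X)-1)\cdots(\chi(X)-n+1)$. Working with open strata lets you use only the free covering $F_n(X)\to C_n(X)$ and skip Macdonald's formula altogether.

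The price is that you need a setting where $\chi_c$ is additive and multiplicative on non-compact pieces, for instance the semialgebraic one with $X$ a finite simplicial complex. Two of your remarks should be tightened.
\begin{enumerate}
\item Product cells of $X^k$ do not descend to cells of $\sub{k}X$, since the image of $e\times e$ contains singletons. You only need that $\sub{k-1}X$ is closed in the compact space $\sub{k}X$.
\item The projection $F_n(X)\to F_{n-1}(X)$ is not locally trivial for a general complex, because the fibre's topology changes as removed points pass between vertices and open cells. Multiplicativity therefore needs Hardt-type triviality over a finite partition of the base; after that, constancy of the fibre's $\chi_c$ suffices.
\end{enumerate}

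The fibration can also be avoided. Stratifying $X^n$ by exact equality pattern gives $\chi(X)^n=\sum_j S(n,j)\,\chi_c(F_j(X))$, with $S(n,j)$ the Stirling numbers of the second kind. Inverting yields the falling factorial; this is the paper's M\"obius computation in disguise.

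Your final cell-counting version is the most illuminating of the routes. The local lemma $\chi_c(C_m(\mathbb{R}^d))=\binom{(-1)^d}{m}$ uses only the genuine Fadell--Neuwirth fibration in $\mathbb{R}^d$. The resulting identity $1+\sum_{m\ge1}\chi_c(C_m(X))\,t^m=(1+t)^{\chi(X)}$ makes the binomial shape of the answer transparent, which the paper's proof does not.
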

Our method consists of constructing $\sub{n}X$ inductively by means of pushouts involving the fat diagonal subspaces in the symmetric products $\symp{k}X$, for $k\leq n$. 

Our computation is a direct follow-up to~\cite{taamallah}.
\section{Preliminaries}\label{prelim}

Throughout, $X$ will be a finite CW complex. We will represent
a strict pushout $X = B \cup_A C$ by a commuting square:
\[
\begin{tikzcd}
A \arrow[r,"f"] \arrow[d,"g"']&B \arrow[d,"\alpha"]\\
C\arrow[r,"\beta"']&X
\end{tikzcd}
\]
The Euler characteristic $\chi$ is additive with respect to such pushouts: i.e., 
\[
\chi(X) = \chi(B) + \chi(C) - \chi(A). \]
 Denote by $\symp{n}X$ the $n$th symmetric product of $X$, which is defined as the quotient $\symp{n}X=X^n/\Sigma_n$ by the action of the $n$th symmetric group. The fat diagonal $F_2(X,n)$ is the subspace of $X^n$ consisting of $n$-tuples with at least $2$ equal entries. Its image in $\symp{n}X$ is denoted $B_2(X,n)$ and is called the unordered fat diagonal.
The following lemma is the basis of our computation.
\begin{lemma}\label{pushoutsubn} There is a pushout diagram
\[
\begin{tikzcd}
B_2(X, n) \arrow[r, "f_n"] \arrow[d, "g_n"'] & \symp{n} X \arrow[d, "\alpha_n"] \\
\sub{n-1} X \arrow[r, "\beta_n"'] & \sub{n} X
\end{tikzcd}
\]
where $f_n$ and $\beta_n$ are inclusions, $\alpha_n$ is the quotient map, and $g_n$ is the map induced by restricting $\alpha_n$ to $B_2(X,n)$, whose image lands in $\sub{n-1}X$.
\end{lemma}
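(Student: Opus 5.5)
The plan is to check directly that the square is a strict pushout of spaces. This means: the square commutes, the induced map $\Phi:\symp{n}X \sqcup_{B_2(X,n)} \sub{n-1}X \to \sub{n}X$ is a bijection, and $\Phi$ is a homeomorphism.

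\textbf{Step 1: the maps are well defined and the square commutes.} The quotient map $\pi:X^n\to\sub{n}X$ is $\Sigma_n$-invariant, so it factors through $\symp{n}X$; this factorization is $\alpha_n$. An unordered tuple in $B_2(X,n)$ has a repeated entry, so its underlying set has at most $n-1$ elements. Hence $\alpha_n$ maps $B_2(X,n)$ into $\sub{n-1}X\subset\sub{n}X$, and this restriction is $g_n$. Commutativity $\beta_n g_n=\alpha_n f_n$ then holds by construction.

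Continuity of $g_n$ as a map into $\sub{n-1}X$, with its own quotient topology, needs an argument. I would use the known fact, which I treat as standard for $X$ a finite CW complex (compact Hausdorff), that $\beta_n$ is a closed embedding. Indeed $\sub{k}X$ is compact Hausdorff and the inclusion $\sub{n-1}X\to\sub{n}X$ is a continuous injection: it is induced by $X^{n-1}\to X^n$, $(x_1,\dots,x_{n-1})\mapsto(x_1,\dots,x_{n-1},x_{n-1})$. A continuous injection from a compact space to a Hausdorff space is an embedding, so $g_n$ is continuous.

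\textbf{Step 2: $\Phi$ is a bijection.} Surjectivity is immediate, since every subset of size $\le n$ either has size $\le n-1$ or comes from an $n$-tuple. For injectivity, consider the complement of $B_2(X,n)$ in $\symp{n}X$, which consists of unordered tuples with distinct entries. On this complement $\alpha_n$ is injective, because a set of exactly $n$ elements determines its unordered $n$-tuple. Its image consists of the sets of size exactly $n$, which is disjoint from $\sub{n-1}X$. Every other point of the pushout already lies in the image of $\sub{n-1}X$, where $\Phi$ is just $\beta_n$, and $\beta_n$ is injective.

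\textbf{Step 3: $\Phi$ is a homeomorphism.} The pushout $P$ is a quotient of $\symp{n}X\sqcup\sub{n-1}X$, which is compact, so $P$ is compact. $\Phi$ is a continuous bijection onto the Hausdorff space $\sub{n}X$, hence a homeomorphism. Hausdorffness of $\sub{n}X$ follows from compactness of $X^n$ together with the fact that the equivalence relation $\pi(x)=\pi(y)$ is closed in $X^n\times X^n$. Alternatively, it follows from the Hausdorff metric when $X$ is metrizable, which holds for finite CW complexes.

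The main obstacle is exactly these point-set details: continuity of $g_n$ into the quotient topology of $\sub{n-1}X$, and identification of the pushout topology. Compactness and Hausdorffness handle both. For a space only homotopy equivalent to a finite CW complex, we first replace $X$ by the complex, using that $\sub{n}$ preserves homotopy type.
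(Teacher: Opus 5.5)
Your argument is correct and complete. The points that actually need checking are:
\begin{enumerate}
\item continuity of $g_n$ into the quotient topology of $\sub{n-1}X$, which you get from $\beta_n$ being a closed embedding;
\item bijectivity of the comparison map $\Phi$, using that $f_n$ is injective (so all nontrivial identifications in the pushout happen over $\sub{n-1}X$) and that $\alpha_n$ is injective off $B_2(X,n)$ with image the $n$-element sets;
\item the compact-to-Hausdorff argument identifying the pushout topology with that of $\sub{n}X$.
\end{enumerate}
You handle all three.

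The paper itself gives no proof of this lemma. It is stated as ``the basis of our computation'' and implicitly imported from \cite{taamallah}. So your proposal supplies a self-contained point-set verification rather than following or diverging from an argument in the text. It uses only that $X$ is compact Hausdorff, so it holds in that generality, not just for finite CW complexes.

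Two small remarks. First, your closing sentence about replacing $X$ by a homotopy-equivalent complex does not give a strict pushout for $X$ itself, only for the replacement complex. This is harmless, because the lemma is stated under the standing assumption that $X$ is a finite CW complex, and only $\chi$ is transported along the homotopy equivalence.

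Second, the paper later draws the additivity $\chi(\sub{n}X)=\chi(\sub{n-1}X)+\chi(\symp{n}X)-\chi(B_2(X,n))$ from this square. That step needs more than a strict pushout: for instance, that $f_n$ is a cofibration, with $B_2(X,n)$ a subcomplex of a CW structure on $\symp{n}X$. Such a structure can be obtained from a $\Sigma_n$-equivariant triangulation of $X^n$ in which $F_2(X,n)$ is a subcomplex. This is not part of the lemma as stated, and neither your argument nor the paper addresses it.
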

The Euler characteristic of the symmetric product is known and given by Macdonald's formula~\cite{macdonald}
\[
\chi(\symp{n}X) = \binom{\chi(X)+n-1}{n}.
\]
The crux of the proof is to determine $\chi (B_2(X,n))$ (this is done in Proposition \ref{chiB2}) to obtain our calculation. 
\section{Euler characteristic of the fat diagonal}Let $X$ be a finite CW complex, and denote its Euler characteristic by $\chi(X) = k$. Recall from Section \ref{prelim} that the fat diagonal $F_2(X, n)$ is the subspace of the cartesian product $X^n$ consisting of tuples where at least two coordinates are equal. We express this space as a finite union of closed diagonal hyperplanes:
\[ 
F_2(X, n) = \bigcup_{1 \le i < j \le n} \Delta_{(i,j)}, 
\]
where $\Delta_{(i,j)} = \{(x_1, \dots, x_n) \in X^n \mid x_i = x_j\}$. 

For a finite union of compact closed subspaces, the inclusion-exclusion principle for the singular Euler characteristic yields the following lemma.
\begin{lemma}
The Euler characteristic of the fat diagonal satisfies
\begin{equation}\label{F2one}
\chi\Big(F_2(X, n)\Big) = -\sum_{\emptyset \neq S \subseteq E(K_n)} (-1)^{|S|} \chi(\Delta_S),
\end{equation}
where $S$ ranges over all non-empty subsets of the edge set $E(K_n) = \{(i,j) \mid 1 \le i < j \le n\}$ of the complete graph $K_n$, and the intersection space is defined as
\[
\Delta_S = \bigcap_{(i,j) \in S} \Delta_{(i,j)}.
\]
\end{lemma}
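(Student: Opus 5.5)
The plan is to obtain the formula from the two-set additivity of the Euler characteristic stated in Section~\ref{prelim}, extended to finitely many sets by induction on the number of sets. First I will arrange the setting so that additivity applies. Give $X^n$ the product CW structure. Since $X$ is a finite CW complex, the diagonal of $X\times X$ is generally not a subcomplex, so instead I will triangulate $X$ (or pass to a homotopy equivalent simplicial complex). The product $X^n$ then admits a triangulation, for instance via ordered simplicial sets, in which every $\Delta_{(i,j)}$ is a subcomplex. Concretely, take the simplicial-set product $K^n$ of an ordered simplicial complex $K$ with $|K|\simeq X$; its geometric realization is $|K|^n$. In it, each diagonal $\{x_i=x_j\}$ is a simplicial subset, being the image of $K^{n-1}$ under a diagonal map. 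With this choice every $\Delta_S$, and every finite union of $\Delta$'s, is a finite subcomplex. Finite subcomplexes $A,B$ of a CW complex give a pushout $A\cup B = A\cup_{A\cap B} B$ of the type in Section~\ref{prelim}. Equivalently, $\chi$ counts cells, and counting cells is obviously inclusion--exclusion additive.

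Next, I will prove by induction on $m$ that for subcomplexes $A_1,\dots,A_m$
\[
\chi\Big(\bigcup_{r=1}^m A_r\Big)=\sum_{\emptyset\neq T\subseteq\{1,\dots,m\}}(-1)^{|T|+1}\chi\Big(\bigcap_{r\in T}A_r\Big).
\]
The inductive step writes $\bigcup_{r\le m}A_r = U\cup A_m$ with $U=\bigcup_{r<m}A_r$. Two-set additivity gives $\chi(U)+\chi(A_m)-\chi(U\cap A_m)$. Then I apply the induction hypothesis to $U$ and to $U\cap A_m=\bigcup_{r<m}(A_r\cap A_m)$, which is again a union of $m-1$ subcomplexes. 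Regrouping the terms according to whether $m\in T$ finishes the step.

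Applying this with the family $\{\Delta_{(i,j)}\}$ indexed by $E(K_n)$ gives $\chi(F_2(X,n))=\sum_{S\neq\emptyset}(-1)^{|S|+1}\chi(\Delta_S)$, which is \eqref{F2one}. The only delicate point is the first step: ensuring the diagonals are simultaneously subcomplexes, so that $\chi$ computed cellularly agrees with the singular Euler characteristic. I will handle this with the simplicial-set product structure. Alternatively one can use Mayer--Vietoris for the closed sets, since subcomplex pairs are excisive (NDR pairs). Either way it follows from homotopy invariance of $\chi$ together with the finiteness of all the complexes involved.
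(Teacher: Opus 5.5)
Your proof follows the paper's route: inclusion--exclusion for $\chi$ over the cover of $F_2(X,n)$ by the closed sets $\Delta_{(i,j)}$. The difference is that you derive the many-set formula from two-set pushout additivity and supply the tameness it needs, by making every $\Delta_S$ a subcomplex of $|K^n|$; the paper simply cites ``the topological inclusion--exclusion formula'' without this. Make one step explicit: carrying the identity from $|K|$ back to $X$ needs $F_2(X,n)\simeq F_2(|K|,n)$, which is not a consequence of homotopy invariance of $\chi$ alone. It does hold, because a homotopy equivalence, its inverse and the connecting homotopies, applied coordinatewise, all preserve the fat diagonal and each $\Delta_S$.
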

\begin{proof}
Applying the topological inclusion-exclusion formula yields
\[
\chi\Big(F_2(X, n)\Big) = \sum_{r=1}^{\binom{n}{2}} \left( (-1)^{r-1} \sum_{S \subseteq E(K_n), \, |S| = r} \chi(\Delta_S) \right).
\]
Changing the indexing to range over all non-empty edge subsets $S \subseteq E(K_n)$ completes the proof.
\end{proof}To evaluate the right-hand side of Equation~\eqref{F2one}, we group identical intersection spaces using the partition poset $\Pi_n$ of the index set $[n] = \{1, \dots, n\}$. Every edge subset $S \subseteq E(K_n)$ uniquely defines a spanning subgraph $G = ([n], S)$ of the complete graph $K_n$. We denote by $\lambda(S) \in \Pi_n$ the partition whose blocks correspond to the connected components of $G$. 

A point $(x_1, \dots, x_n) \in X^n$ belongs to $\Delta_S$ if and only if $x_i = x_j$ whenever $i$ and $j$ lie in the same connected component of $G$. This implies a homeomorphism $\Delta_S \cong X^{|\lambda(S)|}$, where $|\lambda(S)|$ is the number of blocks in $\lambda(S)$. Consequently, we have $\chi(\Delta_S) = k^{|\lambda(S)|}$. Collecting identical partition terms allows us to rewrite Equation~\eqref{F2one} as a sum over the partition lattice:
\begin{equation}\label{F2two}
\chi\Big(F_2(X, n)\Big) = -\sum_{\lambda > \hat{0}} c(\lambda) k^{|\lambda|}, \quad \text{where} \quad c(\lambda) = \sum_{S \implies \lambda} (-1)^{|S|}.
\end{equation}
Here, the notation $S \implies \lambda$ indicates that the connected components of the graph $([n], S)$ match the blocks of $\lambda$, and $\hat{0}$ denotes the partition $\hat{0}=\Big\{\{1\},\dots,\{n\}\Big\}$.
\begin{remark}
The trivial partition $\lambda = \hat{0} = \Big\{\{1\}, \dots, \{n\}\Big\}$ is excluded from the summation because any edge subset $S$ must contain at least one edge ($|S| \ge 1$), so every partition $\lambda $ must contain at least one block of size $\ge 2$.
\end{remark}
\begin{lemma}\label{comb_coeff}
For any partition $\lambda \in \Pi_n$, the combinatorial coefficient $c(\lambda)$ satisfies
\begin{equation}
c(\lambda) = (-1)^{n - |\lambda|} \prod_{B \in \lambda} (|B|-1)! = \mu(\hat{0}, \lambda),
\end{equation}
where $\mu(\hat{0}, \lambda)$ is the M\"{o}bius function of the partition lattice $\Pi_n$.
\end{lemma}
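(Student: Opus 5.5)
The plan is to factor $c(\lambda)$ over the blocks of $\lambda$, reduce to a single block via connected spanning subgraphs, and then identify the result with the Möbius function.

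\emph{Factorization over blocks.} An edge set $S$ with $S \implies \lambda$ is the same thing as a choice, for each block $B\in\lambda$, of an edge set $S_B \subseteq E(K_B)$ such that the graph $(B,S_B)$ is connected. No edge of $S$ can join two different blocks, since then the blocks would merge. The sign $(-1)^{|S|}=\prod_B(-1)^{|S_B|}$ is multiplicative over blocks, so
\[
c(\lambda)=\prod_{B\in\lambda} a_{|B|}, \qquad a_m=\sum_{\substack{T\subseteq E(K_m)\\ ([m],T)\ \text{connected}}}(-1)^{|T|}.
\]
With this in hand, the first equality reduces to the claim $a_m=(-1)^{m-1}(m-1)!$. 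Taking the product over blocks then gives $(-1)^{\sum_B(|B|-1)}\prod_B(|B|-1)! = (-1)^{n-|\lambda|}\prod_B(|B|-1)!$.

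\emph{Computing $a_m$.} This is the main step, and I would set up a recursion. Sum $(-1)^{|S|}$ over all subsets $S\subseteq E(K_m)$ and group them by their partition $\lambda(S)$. Since $\sum_{S\subseteq E(K_m)}(-1)^{|S|}=0$ for $m\ge 2$, the factorization above gives
\[
\sum_{\pi\in\Pi_m}\prod_{B\in\pi}a_{|B|}=0\quad(m\ge2),\qquad a_1=1 .
\]
Now condition on the block containing the vertex $m$. If that block has size $j$, there are $\binom{m-1}{j-1}$ ways to choose it. The remaining $m-j$ vertices are partitioned arbitrarily, and the inner sum over those partitions is $0$ unless $m-j=0$. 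Hence the recursion collapses to
\[
a_m+\sum_{j=1}^{m-1}\binom{m-1}{j-1}a_j\cdot[m-j=0]=0,
\]
which is not yet usable. To get a usable recursion I would instead single out the \emph{pair} $(m-1,m)$, or more directly use the exponential formula: with $A(x)=\sum_m a_m x^m/m!$, the relation reads $e^{A(x)}=1+x$. Therefore $A(x)=\log(1+x)$, so $a_m=(-1)^{m-1}(m-1)!$. I expect getting this identity cleanly to be the main obstacle, and the generating-function route is the one I would try first. An alternative is deletion–contraction on the edge $(m-1,m)$, giving $a_m=-(m-1)a_{m-1}$.

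\emph{Identifying the Möbius function.} For the second equality, I would use the same structure. The interval $[\hat0,\lambda]$ in $\Pi_n$ is isomorphic to $\prod_{B\in\lambda}\Pi_{|B|}$, and $\mu$ is multiplicative on products of posets, so it suffices to show $\mu_{\Pi_m}(\hat0,\hat1)=a_m$. For this, note that $\sum_{S:\lambda(S)\le\pi}(-1)^{|S|}$ equals $\prod_{B\in\pi}\sum_{S\subseteq E(K_B)}(-1)^{|S|}$. This product is $1$ if $\pi=\hat0$ and $0$ otherwise. So $\sum_{\sigma\le\pi}c(\sigma)=\delta_{\hat0,\pi}$, which is exactly the defining recursion of $\mu(\hat0,\cdot)$. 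Hence $c(\lambda)=\mu(\hat0,\lambda)$. This argument in fact gives the second equality independently of the explicit formula, and it also yields $a_m=\mu_{\Pi_m}(\hat0,\hat1)$. Combined with the known value $\mu_{\Pi_m}(\hat0,\hat1)=(-1)^{m-1}(m-1)!$, that could replace the generating-function step if preferred.
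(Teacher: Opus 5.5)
Your argument is correct. After the shared first step it takes a different route from the paper. That shared step is the factorization $c(\lambda)=\prod_{B\in\lambda}a_{|B|}$ over blocks, which the paper also uses.

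The paper evaluates $a_m$ by citing the classical fact that the signed count of connected spanning subgraphs is the linear coefficient of the chromatic polynomial. It reads off $(-1)^{m-1}(m-1)!$ from $P(K_m;t)=t(t-1)\cdots(t-m+1)$, and then cites the known formula for $\mu(\hat0,\lambda)$ in $\Pi_n$.

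You derive everything from the elementary vanishing of $\sum_{S\subseteq E(K_B)}(-1)^{|S|}$ for $|B|\ge 2$, used twice:
\begin{itemize}
\item Used once, it shows that $\sum_{\pi\in\Pi_m}\prod_{B\in\pi}a_{|B|}$ equals $1$ for $m\le 1$ and $0$ for $m\ge 2$. Hence $e^{A(x)}=1+x$, which gives the value of $a_m$.
\item Used again, via $\lambda(S)\le\pi \iff S\subseteq\bigsqcup_{B\in\pi}E(K_B)$, it gives $\sum_{\sigma\le\pi}c(\sigma)=\delta_{\hat0,\pi}$. This identifies $c$ with $\mu(\hat0,\cdot)$ outright; it is in effect Rota's crosscut theorem for the atoms of $\Pi_n$.
\end{itemize}
Your version is self-contained and proves both of the inputs the paper cites. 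The paper's version is shorter but outsources those two nontrivial facts.

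There is one slip in your discarded first attempt. The inner sum over partitions of the remaining $m-j$ vertices equals $1$ not only when $m-j=0$ but also when $m-j=1$, since the single partition of a one-point set has weight $a_1=1$. As displayed, your relation would force $a_m=0$. Keeping the $j=m-1$ term gives $a_m+(m-1)a_{m-1}=0$ at once, so that recursion was usable and the generating function is not even needed.

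Likewise, a single deletion--contraction on $(m-1,m)$ does not by itself yield $a_m=-(m-1)a_{m-1}$. To get it that way you would have to iterate over all $m-1$ edges at vertex $m$.
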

\begin{proof}
The condition $S\implies\lambda$ requires that $S$ contains no cross-edges between distinct blocks of $\lambda$. Thus, $S$ decomposes uniquely as a disjoint union of local edge sets $ S = \bigsqcup_{B \in \lambda} S_B$, where each $S_B \subseteq E(K_B)$ forms a connected spanning subgraph on the vertex block $B$, where $K_B$ denotes the complete graph with vertex set $B$. This structure allows the global sum to factor into a product of independent local sums:
\[
c(\lambda) = \prod_{B \in \lambda} \left( \sum_{\substack{S_B \subseteq E(K_B) \\ (B, S_B) \text{ is connected}}} (-1)^{|S_B|} \right).
\]
By a classic theorem on chromatic polynomials~\cite{morganvena}, the inner sum over connected spanning subgraphs is precisely the linear coefficient of the chromatic polynomial of the complete graph $K_B$. Since 
\[
P(K_B; t) = t(t - 1)\cdots (t - |B| + 1),
\] 
extracting this linear coefficient yields $(-1)^{|B|-1}(|B|-1)!$.

Multiplying these terms together gives
\[ 
c(\lambda) = \prod_{B \in \lambda} (-1)^{|B|-1}(|B|-1)! = (-1)^{\sum_{B \in \lambda} (|B|-1)} \prod_{B \in \lambda} (|B|-1)! 
\]
Since 
\[
\sum_{B \in \lambda} |B| = n \;\text{and}\;\sum_{B \in \lambda} 1 = |\lambda|,
\] the exponent simplifies to $n - |\lambda|$. Hence,
\[
c(\lambda) = (-1)^{n - |\lambda|} \prod_{B \in \lambda} (|B|-1)! 
\]
matching the known value of the lattice M\"{o}bius function $\mu(\hat{0}, \lambda)$~\cite{callanstong}.
\end{proof}
\begin{example}
To illustrate the combinatorial structure of Equation~\eqref{F2two} and Lemma~\ref{comb_coeff}, consider the case $n = 3$. The partition poset $\Pi_3$ contains four partitions strictly greater than $\hat{0}$:
\begin{itemize}
\item Three partitions $\lambda_1=\big\{\{1,2\}, \{3\}\big\}$, $\lambda_2=\big\{\{2,3\}, \{1\}\big\}$, and $\lambda_3=\big\{\{1,3\}, \{2\}\big\}$, each possessing a unique generating edge $S \subset E(K_3)$. This yields $c(\lambda_i) = (-1)^1 = -1$ for $i\in \{1,2,3\}$.
\item The maximal partition $\lambda_4 = \big\{\{1,2,3\}\big\}$, which is spanned by the connected subgraphs of $K_3$. These consist of three paths of length two, corresponding to $S = \{(1,2), (2,3)\}$, $S = \{(2,3), (1,3)\}$, and $S = \{(1,2), (1,3)\}$, and the complete graph $K_3$ itself, corresponding to $S = \{(1,2), (2,3), (1,3)\}$. This yields $c(\lambda_4) = 3(-1)^2 + 1(-1)^3 = 2$. 
\end{itemize}These evaluations coincide with the lattice M\"{o}bius function values from Lemma~\ref{comb_coeff}, where $\mu(\hat{0}, \lambda_i) = (-1)^{3-2}(2-1)!(1-1)! = -1$ for $i \in \{1,2,3\}$, and $\mu(\hat{0}, \lambda_4) = (-1)^{3-1}(3-1)! = 2$.

Evaluating Equation~\eqref{F2two} using the respective block counts $|\lambda_i| = 2$ for $i\in \{1,2,3\}$ and $|\lambda_4| = 1$, we obtain:
\[
\chi\Big(F_2(X, 3)\Big) = -\Big( 3(-1)k^2 + 2k \Big) = 3k^2 - 2k.
\]
This matches the expression $k^3 - k(k-1)(k-2)$ established below in Proposition~\ref{chiF2}.
\end{example}
With these combinatorial coefficients established, we can compute the global sum across the entire partition lattice.
\begin{proposition}\label{chiF2}
Let $X$ be a finite CW complex with $\chi(X)=k$. The Euler characteristic of its fat diagonal is given by
\begin{equation*}
\chi\big(F_2(X, n)\big) = k^n - k(k-1)\cdots(k-n+1).
\end{equation*}
\end{proposition}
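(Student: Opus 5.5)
We start from Equation~\eqref{F2two} and substitute the value of the coefficients from Lemma~\ref{comb_coeff}. This gives
\[
\chi\big(F_2(X,n)\big) = -\sum_{\lambda > \hat{0}} \mu(\hat{0},\lambda)\, k^{|\lambda|}.
\]
Next we add and subtract the missing term $\lambda = \hat{0}$. Since $\mu(\hat{0},\hat{0}) = 1$ and $|\hat{0}| = n$, that term equals $k^n$, and we obtain
\[
\chi\big(F_2(X,n)\big) = k^n - \sum_{\lambda \in \Pi_n} \mu(\hat{0},\lambda)\, k^{|\lambda|}.
\]
It therefore suffices to prove the polynomial identity
\[
\sum_{\lambda \in \Pi_n} \mu(\hat{0},\lambda)\, t^{|\lambda|} = t(t-1)\cdots(t-n+1).
\]
Both sides are polynomials in $t$, so it is enough to check it for all positive integers $t$. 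We then specialize $t = k$; this is legitimate even when $k \le 0$, since the identity holds as polynomials.

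To prove the identity, I would use the explicit form $\mu(\hat{0},\lambda) = \prod_{B\in\lambda}(-1)^{|B|-1}(|B|-1)!$. Equivalently, I would retrace the proof of Lemma~\ref{comb_coeff} in reverse. Grouping edge sets $S$ by the partition $\lambda(S)$ gives
\[
\sum_{\lambda} \mu(\hat{0},\lambda)\, t^{|\lambda|} = \sum_{S \subseteq E(K_n)} (-1)^{|S|}\, t^{c(S)},
\]
where $c(S)$ is the number of connected components of $([n],S)$. Here the empty set $S$ contributes the $\hat{0}$ term $t^n$.

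For a positive integer $t$, $t^{c(S)}$ counts the colorings $[n]\to[t]$ that are constant on every edge of $S$. Swapping the order of summation, the sum becomes
\[
\sum_{\varphi:[n]\to[t]} \;\sum_{S \subseteq M(\varphi)} (-1)^{|S|},
\]
where $M(\varphi)$ is the set of monochromatic edges of $\varphi$. The inner sum vanishes unless $M(\varphi) = \emptyset$, in which case it equals $1$. So the total counts the injective maps $[n]\to[t]$, which number $t(t-1)\cdots(t-n+1)$. This is just Whitney's expansion of the chromatic polynomial of $K_n$, the same classical fact cited in Lemma~\ref{comb_coeff}.

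I expect the only delicate point to be the bookkeeping. The term $\hat{0}$ must be reinserted with the correct sign, and the passage from $\lambda$-indexed sums back to $S$-indexed sums must include the empty edge set. Once that is in place, the identity is the standard one for $K_n$. As an alternative route, one can invoke Möbius inversion on $\Pi_n$ applied to $t^n = \sum_{\lambda} (t)_{|\lambda|}$. Finally, I would check the result against the example for $n=3$: the formula gives $k^3 - k(k-1)(k-2) = 3k^2 - 2k$, which matches.
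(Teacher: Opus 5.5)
Your proof is correct. Through the add-and-subtract step it is exactly the paper's argument; the two routes diverge only in how you obtain $\sum_{\lambda\in\Pi_n}\mu(\hat{0},\lambda)\,t^{|\lambda|}=t(t-1)\cdots(t-n+1)$.

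\textbf{The difference.} The paper cites the factorization of the characteristic polynomial of $\Pi_n$. You instead prove the identity in two moves:
\begin{itemize}
\item You ungroup the $\lambda$-sum into $\sum_{S\subseteq E(K_n)}(-1)^{|S|}t^{|\lambda(S)|}$, using $\mu(\hat{0},\lambda)=c(\lambda)$ from Lemma~\ref{comb_coeff}, with $S=\emptyset$ supplying the $\hat{0}$ term.
\item You then run Whitney's inclusion--exclusion over colorings.
\end{itemize}

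\textbf{What your route buys.}
\begin{itemize}
\item It makes the argument self-contained.
\item It shows that the detour through $\Pi_n$ is a round trip. Adding the $S=\emptyset$ term directly to \eqref{F2one} gives $\chi(F_2(X,n))=k^n-\sum_{S}(-1)^{|S|}k^{|\lambda(S)|}=k^n-P(K_n;k)$ at once. So Lemma~\ref{comb_coeff} is not actually needed for this proposition.
\item It avoids a normalization subtlety in the paper's citation. The standard characteristic polynomial of $\Pi_n$ is $\sum_{\lambda}\mu(\hat{0},\lambda)\,t^{|\lambda|-1}=(t-1)\cdots(t-n+1)$, since the rank of $\lambda$ is $n-|\lambda|$. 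The sum in question is therefore $t$ times it, and your direct count produces that factor automatically.
\end{itemize}

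\textbf{What the paper's route buys.} It is shorter and places the computation in the lattice-theoretic framework.

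Your M\"obius-inversion aside is in fact the standard proof of the cited factorization. To invert, though, you need $t^{|\pi|}=\sum_{\lambda\ge\pi}(t)_{|\lambda|}$ for every $\pi\in\Pi_n$, not only for $\pi=\hat{0}$.
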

\begin{proof}
Substituting the evaluation from Lemma~\ref{comb_coeff} into Equation~\eqref{F2two} yields
\[ 
\chi\big(F_2(X, n)\big) = -\sum_{\lambda > \hat{0}} \mu(\hat{0}, \lambda) k^{|\lambda|}.
\]
To evaluate this sum across the entire lattice, we add and subtract the contribution of the missing minimal element $\lambda = \hat{0}$:
\[
\chi\big(F_2(X, n)\big) = \left( -\sum_{\lambda \in \Pi_n} \mu(\hat{0}, \lambda) k^{|\lambda|} \right) + \mu(\hat{0}, \hat{0}) k^{|\hat{0}|}.
\]
Recall that $\mu(\hat{0}, \hat{0}) = 1$, and the minimal partition $\hat{0}$ contains exactly $n$ singletons, meaning $|\hat{0}| = n$. Rearranging the terms yields:
\begin{equation*}
\chi\big(F_2(X, n)\big) = k^n - \sum_{\lambda \in \Pi_n} \mu(\hat{0}, \lambda) k^{|\lambda|}. 
\end{equation*}
The remaining sum matches the definition of the characteristic polynomial $P(\Pi_n; k)$ of the partition lattice. Invoking its known factorization identity~\cite[p.~2]{hallamsagan} yields \[P(\Pi_n; k) = k(k-1)\cdots(k-n+1),\] which completes the proof.
\end{proof}
This calculation serves as the main tool to track the Euler characteristic of the unordered fat diagonal $B_2(X,n)$ and the finite subset space $\sub{n}X$ in the sections that follow.
 \section{The Euler characteristic of the unordered fat diagonal}
 
 Recall that for a finite group $G$ acting on a finite CW complex $Y$, the Euler characteristic of the orbit space $Y/G$ is given by the standard formula (see \cite{hirzebruchhofer, bryanfulman}):
\begin{equation}\label{burnside}
 \chi(Y/G) = \frac{1}{|G|} \sum_{g \in G} \chi(Y^g)
\end{equation}
 where $Y^g$ denotes the fixed-point subspace of $Y$ under the action of $g$ 
 \[
 Y^g = \{y \in Y \mid g \cdot y = y\}.
 \] 
 Let $\sigma \in \Sigma_n$ be any permutation other than the identity ($\sigma \neq \mathrm{id}$). Then $\sigma$ must contain at least one cycle of length $d \ge 2$. 
 
 For a point $(x_1,\dots,x_n) \in X^n$ to be fixed by $\sigma$, all coordinates in that cycle must be equal. Since $d \ge 2$, the point $(x_1,\dots,x_n)$ necessarily belongs to the fat diagonal $F_2(X, n)$. Therefore, $(X^n)^\sigma \subseteq F_2(X, n)$. Since, by definition, 
\[
F_2(X,n)^\sigma = F_2(X,n) \cap (X^n)^\sigma
\]
 we have 
\[
F_2(X,n)^\sigma = (X^n)^\sigma. 
\] 
 Hence, 
 \begin{equation}\label{B2one}
 \chi(F_2(X, n)^\sigma) = \chi((X^n)^\sigma) \quad \text{for all } \sigma \neq \mathrm{id}.
 \end{equation}
 Applying the formula \eqref{burnside} to the quotient $B_2(X, n) = F_2(X, n)/\Sigma_n$:
 \[
 \chi(B_2(X, n)) = \frac{1}{n!} \left( \chi(F_2(X, n)) + \sum_{\sigma \neq \mathrm{id}} \chi(F_2(X, n)^\sigma) \right).
 \]
 Using \eqref{B2one}, one gets 
 \begin{equation}\label{B2two}
 \chi(B_2(X, n)) = \frac{1}{n!} \left( \chi(F_2(X, n)) + \sum_{\sigma \neq \mathrm{id}} \chi((X^n)^\sigma) \right). 
 \end{equation}
 Applying the formula \eqref{burnside} to the entire symmetric product $\symp{n} X = X^n / \Sigma_n$:
 \[
 n! \chi(\symp{n} X) = \chi((X^n)^{\mathrm{id}}) + \sum_{\sigma \neq \mathrm{id}} \chi((X^n)^\sigma).
 \]
Substituting $\chi((X^n)^{\mathrm{id}}) = k^n$, we isolate the sum:
\begin{equation}\label{B2three}
 \sum_{\sigma \neq \mathrm{id}} \chi((X^n)^\sigma) = n! \chi(\symp{n} X) - k^n.
\end{equation} 
Now, substituting the evaluation of $\chi(F_2(X,n))$ from Proposition~\ref{chiF2} along with~\eqref{B2three} into~\eqref{B2two} yields
\[
\chi(B_2(X, n)) = \frac{1}{n!} \Big( [k^n - k(k-1)\cdots(k-n+1)] + [n! \chi(\symp{n} X) - k^n] \Big).
\]
This simplifies to
\[
\chi(B_2(X, n)) = \chi(\symp{n} X) - \frac{k(k-1)\cdots(k-n+1)}{n!}.
\]
Invoking Macdonald's formula~\cite{macdonald} alongside the identity for binomial coefficients yields the following desired result.
\begin{proposition}\label{chiB2}
 For a finite CW complex $X$ with $\chi(X)=k$, we have 
\[
\chi\Big(B_2(X, n)\Big) = \binom{k+n-1}{n} - \binom{k}{n}.
\]
 \end{proposition}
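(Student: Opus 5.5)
The proposition is essentially the last line of the computation just carried out. I will take the identity derived immediately above,
\[
\chi(B_2(X, n)) = \chi(\symp{n} X) - \frac{k(k-1)\cdots(k-n+1)}{n!},
\]
and rewrite both terms on the right-hand side as binomial coefficients.

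For the first term, I will invoke Macdonald's formula from Section~\ref{prelim}, which gives $\chi(\symp{n}X)=\binom{k+n-1}{n}$. For the second term, I will use the polynomial definition of the binomial coefficient,
\[
\binom{k}{n}=\frac{k(k-1)\cdots(k-n+1)}{n!}.
\]
This definition is valid for every integer $k$, including $k<n$ (the numerator then contains the factor $0$) and negative $k$. Since $\chi(X)$ may be negative, it matters that Macdonald's formula is read with the same polynomial convention. Substituting both expressions gives the claimed formula.

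The real content sits in the steps before the statement, so I would double-check them in this order.
\begin{enumerate}
\item The orbit-counting formula~\eqref{burnside} requires a cellular $\Sigma_n$-action. I would give $X^n$ a $\Sigma_n$-CW structure, for instance by passing to a simplicial model of $X$ and subdividing. Then $F_2(X,n)$ is a $\Sigma_n$-invariant subcomplex, and each fixed set $(X^n)^\sigma$ is a subcomplex homeomorphic to $X^{c(\sigma)}$, where $c(\sigma)$ is the number of cycles of $\sigma$.
\item The identity $F_2(X,n)^\sigma=(X^n)^\sigma$ for $\sigma\neq\mathrm{id}$ holds because a nontrivial cycle forces two coordinates to be equal.
\item Proposition~\ref{chiF2} is used for the identity term $\chi(F_2(X,n))$.
\end{enumerate}

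The step I expect to need the most care is the first one: justifying~\eqref{burnside} for the subspace $F_2(X,n)$, not only for $X^n$. My first approach would be to triangulate $X$ and use the induced product cell structure on $X^n$. Since the diagonals $\Delta_{(i,j)}$ are subcomplexes of the product structure after a suitable subdivision, the fat diagonal inherits an equivariant cell structure, and the formula then applies.
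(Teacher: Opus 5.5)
Your proposal is correct and follows the paper's own argument. The paper likewise obtains $\chi(B_2(X,n)) = \chi(\symp{n}X) - k(k-1)\cdots(k-n+1)/n!$ by applying the orbit-counting formula to $F_2(X,n)$ and $X^n$, using $F_2(X,n)^\sigma=(X^n)^\sigma$ for $\sigma\neq\mathrm{id}$ together with Proposition~\ref{chiF2}, and then concludes with Macdonald's formula and the polynomial expression for $\binom{k}{n}$; your extra care about the equivariant cell structure and the convention for negative $k$ makes explicit what the paper leaves implicit.
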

\section{The Euler characteristic of the finite subset space}
 
In this final section, we prove our main result. 
\begin{theorem}
Let $X$ be a space of the homotopy type of a finite CW complex 
\[
\chi(\sub{n} X) = \sum_{i=1}^n \binom{\chi(X)}{i}. 
\]
\end{theorem}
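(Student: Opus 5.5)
We argue by induction on $n$, using the pushout of Lemma~\ref{pushoutsubn} together with the additivity of $\chi$ over strict pushouts recalled in Section~\ref{prelim}. Since $\sub{n}(-)$ preserves homotopy type and $\chi$ is a homotopy invariant, we may replace $X$ by a homotopy equivalent finite CW complex. So assume throughout that $X$ is a finite CW complex with $\chi(X)=k$.

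\emph{Base case.} For $n=1$, the map $\pi$ identifies $\sub{1}X$ with $X$. Hence $\chi(\sub{1}X)=k=\binom{k}{1}$, as required.

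\emph{Inductive step.} Let $n\ge 2$ and assume $\chi(\sub{n-1}X)=\sum_{i=1}^{n-1}\binom{k}{i}$. Additivity applied to the square of Lemma~\ref{pushoutsubn} gives
\[
\chi(\sub{n}X)=\chi(\symp{n}X)+\chi(\sub{n-1}X)-\chi(B_2(X,n)).
\]
We then substitute three known values: Macdonald's formula $\chi(\symp{n}X)=\binom{k+n-1}{n}$, the induction hypothesis, and Proposition~\ref{chiB2}, which gives $\chi(B_2(X,n))=\binom{k+n-1}{n}-\binom{k}{n}$. The two Macdonald terms cancel, leaving
\[
\chi(\sub{n}X)=\sum_{i=1}^{n-1}\binom{k}{i}+\binom{k}{n}=\sum_{i=1}^{n}\binom{k}{i}.
\]
This closes the induction. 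Here $\binom{k}{i}$ denotes the generalized binomial coefficient $k(k-1)\cdots(k-i+1)/i!$, which is valid for negative $k$ as well. This is the same convention under which Proposition~\ref{chiB2} and Macdonald's formula are stated, so the computation is consistent.

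\emph{Main obstacle.} The only delicate point is justifying that additivity applies to the square of Lemma~\ref{pushoutsubn}. This needs the pushout to be a cofibration pushout, for instance with all four spaces CW complexes and $f_n$ a subcomplex inclusion. I would handle it as follows. Give $X^n$ a $\Sigma_n$-equivariant CW structure in which the fat diagonal $F_2(X,n)$ is a subcomplex; one way is to pass to a simplicial structure on $X$ and subdivide. Then $B_2(X,n)\subset\symp{n}X$ is a subcomplex of the induced quotient CW structure. The attaching map $g_n$ can be taken cellular after this subdivision, so the pushout is a CW complex. The Mayer--Vietoris sequence for the pushout then yields $\chi(\sub{n}X)=\chi(\symp{n}X)+\chi(\sub{n-1}X)-\chi(B_2(X,n))$. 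Beyond this point, the argument is purely formal.
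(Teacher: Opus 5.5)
Your proposal is correct and follows the paper's proof essentially step for step. You reduce to a finite CW complex by homotopy invariance, apply additivity of $\chi$ to the pushout of Lemma~\ref{pushoutsubn}, substitute Macdonald's formula and Proposition~\ref{chiB2} so that the $\binom{k+n-1}{n}$ terms cancel, and induct from $\sub{1}X\cong X$. Your extra justification of additivity and your note on generalized binomial coefficients for negative $\chi(X)$ fill in points the paper takes for granted. For additivity, only the cofibration property of $B_2(X,n)\hookrightarrow\symp{n}X$ is actually needed, via excision $H_*(\sub{n}X,\sub{n-1}X)\cong H_*(\symp{n}X,B_2(X,n))$, so cellularity of $g_n$ is not required.
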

\begin{proof}
By the homotopy invariance of the functor $\sub{n}(-)$, it suffices to assume that $X$ is a finite CW complex. Using the additive property of the Euler characteristic on the pushout square from Lemma~\ref{pushoutsubn}:
\[
\chi(\sub{n} X) = \chi(\sub{n-1} X) + \chi(\symp{n} X) - \chi(B_2(X, n)).
\] 
Using $k = \chi(X)$, we obtain:
\[
\chi(\symp{n} X)-\chi(B_2(X, n)) = \binom{k+n-1}{n}- \left[\binom{k+n-1}{n} - \binom{k}{n}\right] =\binom{k}{n}.
\]
Hence,
\begin{equation}\label{chisubninduction}
 \chi(\sub{n} X) = \chi(\sub{n-1} X)+\binom{k}{n}.
 \end{equation}
 We now proceed by induction on $n$.
 
For $n=1$, we have $\sub{1} X \cong X$, which implies 
\[
\sum_{i=1}^1 \binom{\chi(X)}{i} = \chi(X).
\]
Thus, the base case holds. 

Assume the formula holds true for $n-1$:
 \[
 \chi(\sub{n-1} X) = \sum_{i=1}^{n-1} \binom{k}{i}.
 \]
 Applying the induction hypothesis to the recurrence relation \eqref{chisubninduction} yields
 \[
\chi(\sub{n} X) = \sum_{i=1}^{n-1} \binom{k}{i} + \binom{k}{n} = \sum_{i=1}^n \binom{k}{i} = \sum_{i=1}^n \binom{\chi(X)}{i}.
\]
\end{proof}
As a corollary, we recover a known calculation of Tuffley (\cite{tuffley4}, Theorem 5)
\begin{corollary}
 Let $\Sigma_g$ be a closed orientable surface of genus $g$. Then
 \[
\chi(\sub{3}\Sigma_g)=\dfrac{-4g^3+12g^2-17g+9}{3}.
\]
\end{corollary}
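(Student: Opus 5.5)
The corollary is a direct specialization of the main theorem, with $n=3$ and $X=\Sigma_g$. A closed orientable surface of genus $g$ is a finite CW complex with $\chi(\Sigma_g)=2-2g$, so the theorem applies and gives
\[
\chi(\sub{3}\Sigma_g)=\binom{k}{1}+\binom{k}{2}+\binom{k}{3},\qquad k=2-2g.
\]
Here the binomial coefficients are the polynomial ones, $\binom{k}{i}=k(k-1)\cdots(k-i+1)/i!$, which is the meaning they carry throughout the paper. This matters because $k$ is negative for $g\ge 2$. The plan is therefore to substitute $k=2-2g$ into this cubic polynomial in $k$ and simplify.

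Concretely, I would first combine the three terms over the common denominator $6$:
\[
k+\frac{k(k-1)}{2}+\frac{k(k-1)(k-2)}{6}=\frac{k^3+5k}{6}.
\]
This follows since $6k+3k^2-3k+k^3-3k^2+2k=k^3+5k$. Next I would substitute $k=2-2g$. We have $k^3=(2-2g)^3=8-24g+24g^2-8g^3$, and $5k=10-10g$, so
\[
k^3+5k=-8g^3+24g^2-34g+18.
\]
Dividing by $6$ then gives $(-4g^3+12g^2-17g+9)/3$, which is the stated formula.

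The only genuine subtlety is making sure that the binomial coefficients in the main theorem really are polynomial in $k$ for negative $k$. This holds because the theorem comes from Macdonald's formula and Proposition~\ref{chiB2}, where $\binom{k}{n}$ arises as $k(k-1)\cdots(k-n+1)/n!$. As sanity checks: $g=0$ gives $3$, which matches $\binom{2}{1}+\binom{2}{2}=3$ for $\sub{3}S^2$; $g=1$ gives $0$, consistent with $k=0$. Apart from this, the proof is pure arithmetic.
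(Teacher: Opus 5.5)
Your proposal is correct and is exactly the argument the paper leaves implicit: specialize the main theorem to $n=3$ with $\chi(\Sigma_g)=2-2g$. Your simplification $\binom{k}{1}+\binom{k}{2}+\binom{k}{3}=(k^3+5k)/6$ and the substitution are both right, and your remark that the binomial coefficients must be read as polynomials in $k$ (as they arise from Macdonald's formula and Proposition~\ref{chiB2}) is a point the paper does not state explicitly.
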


\end{document}